\newcommand{\Z}        {{{\rm Z\!\! Z}}}
\newcommand{\E}        {{{\rm I\! E}}}
\def\1{\mbox{1\hspace{-.25em}I}}
\def\t{\theta}
\def\d{\delta}
\def\D{\Delta}
\def\Th{\Theta}
\def\e{\varepsilon}
\def\bR{\mathbb{R}}
\newtheorem{theorem}{Theorem}
\newtheorem{lemma}{Lemma}
\newtheorem{remark}{Remark}
\begin{document}

\begin{frontmatter}
\title{Improved Matrix Uncertainty Selector}
\runtitle{Matrix Uncertainty Selector}

\author{Mathieu Rosenbaum\thanksref{a}}
\and
\author{Alexandre B.~Tsybakov\thanksref{b,t2}}

\thankstext{t2}{Supported in part by ANR ``Parcimonie" and by
PASCAL-2 Network of Excellence.}

\address[a]{Universit\'e Pierre et Marie Curie, Paris-6, LPMA, case
courrier 188, 4 place Jussieu, 75252 Paris Cedex 05, France and
CREST}
\address[b]{CREST (ENSAE), 3, av.\ Pierre Larousse, 92240 Malakoff, France}


\runauthor{M.Rosenbaum and A.B.Tsybakov}

\contributor{???}{???}

%
%
%

\begin{abstract}
We consider the regression model with observation error in the
design:
\begin{eqnarray*}
y&=& X\theta^* + \xi,\\Z&=&X+\Xi.
\end{eqnarray*}
Here the random vector $y\in\bR^n$ and the random $n\times p$ matrix
$Z$ are observed, the $n\times p$ matrix $X$ is unknown, $\Xi$ is an
$n\times p$ random noise matrix, $\xi\in \bR^n$ is a random noise
vector, and $\theta^*$ is a vector of unknown parameters to be
estimated. We consider the setting where the dimension $p$ can be
much larger than the sample size $n$ and $\theta^*$ is sparse.
Because of the presence of the noise matrix $\Xi$, the commonly used
Lasso and Dantzig selector are unstable. An alternative procedure
called the Matrix Uncertainty (MU) selector has been proposed in
Rosenbaum and Tsybakov~(2010) in order to account for the noise. The
properties of the MU selector have been studied in Rosenbaum and
Tsybakov~(2010) for sparse $\theta^*$ under the assumption that the
noise matrix $\Xi$ is deterministic and its values are small. In
this paper, we propose a modification of the MU selector when $\Xi$
is a random matrix with zero-mean entries having the variances that
can be estimated. This is, for example, the case in the model where
the entries of $X$ are missing at random. We show both theoretically
and numerically that, under these conditions, the new estimator
called the Compensated MU selector achieves better accuracy of
estimation than the original MU selector.
\end{abstract}

\begin{keyword}[class=AMS]
\kwd[Primary ]{62J05}
\kwd[; secondary ]{62F12}
\end{keyword}

\begin{keyword}
\kwd{Sparsity} \kwd{MU selector} \kwd{matrix
uncertainty}\kwd{errors-in-variables model}\kwd{measurement error}
\kwd{restricted eigenvalue assumption} \kwd{missing data}
\end{keyword}

\end{frontmatter}

\section{Introduction}\label{intro}

We consider the model
\begin{eqnarray}\label{0}
y&=& X\theta^* + \xi,\\Z&=&X+\Xi,\label{00}
\end{eqnarray}
where the random vector $y\in\bR^n$ and the random $n\times p$
matrix $Z$ are observed, the $n\times p$ matrix $X$ is unknown,
$\Xi$ is an $n\times p$ random noise matrix, $\xi\in \bR^n$ is a
random noise vector, $\theta^*=(\theta^*_1,\dots,\theta^*_p)\in
\Theta$ is a vector of unknown parameters to be estimated, and
$\Theta$ is a given subset of $\bR^p$.  We consider the problem of
estimating an $s$-sparse vector $\theta^*$ (i.e., a vector
$\theta^*$ having only $s$ non zero components), with $p$ possibly
much larger than $n$. If the matrix $X$ in \eqref{0}--\eqref{00} is
observed without error ($\Xi=0$), this problem has been recently
studied in numerous papers. The proposed estimators mainly rely on
$\ell_1$ minimization techniques. In particular, this is the case
for the widely used Lasso and Dantzig selector, see among others
Cand\`es and Tao~(2007), Bunea et al.~(2007a,b), Bickel et
al.~(2009), Koltchinskii~(2009), the book by B\"uhlmann and van de
Geer~(2011), the lecture notes by Koltchinskii~(2011), Belloni and
Chernozhukov~(2011) and the references cited therein.


However, it is shown in Rosenbaum and Tsybakov~(2010) that dealing
with a noisy observation of the regression matrix $X$ has severe
consequences. In particular, the Lasso and Dantzig selector become
very unstable in this context. An alternative procedure, called the
matrix uncertainty selector (MU selector for short) is proposed in
Rosenbaum and Tsybakov~(2010) in order to account for the presence
of noise $\Xi$. The MU selector $\hat \theta^{MU}$ is defined as a
solution of the minimization problem
\begin{equation}\label{mu_sel}
\min\{ |\t|_1: \,\, \t\in\Th, \,
\Big|\frac1{n}Z^T(y-Z\t)\Big|_\infty\le \mu |\t|_1 +\tau\},
\end{equation}
where $|\cdot|_{p}$ denotes the $\ell_p$-norm, $1\le p\le\infty$,
$\Th$ is a given subset of $\mathbb{R}^p$ characterizing the prior
knowledge about $\theta^*$, and the constants $\mu$ and $\tau$
depend on the level of the noises $\Xi$ and $\xi$ respectively. If
the noise terms $\xi$ and $\Xi$ are deterministic, it is suggested
in Rosenbaum and Tsybakov~(2010) to choose $\tau$ such that
$$
\Big|\frac1{n}Z^T\xi\Big|_\infty\le \tau,
$$
and to take $\mu=\delta(1+\delta)$ with $\d$ such that
$$
|\Xi|_\infty\le\d,
$$
where, for a matrix $A$, we denote by $|A|_\infty$ its componentwise
$\ell_\infty$-norm.

In this paper, we propose a modification of the MU selector for the
model where $\Xi$ is a random matrix with independent and zero mean
entries $\Xi_{ij}$ such that the sums of expectations
$$
\sigma_j^2\triangleq\frac{1}{n}\sum_{i=1}^n \E(\Xi_{ij}^2), \quad
1\leq j\leq p,
$$
are finite and admit data-driven estimators. Our main example where
such estimators exist is the model with data missing at random (see
below).  The idea underlying the new estimator is the following. In
the ideal setting where there is no noise $\Xi$, the estimation
strategy for $\theta^*$ is based on the matrix $X$. When there is
noise this is impossible since $X$ is not observed and so we have no
other choice than using $Z$ instead of $X$. However, it is not hard
to see that under the above assumptions on $\Xi$, the matrix
$Z^TZ/n$ appearing in \eqref{mu_sel} contains a bias induced by the
diagonal entries of the matrix $\Xi^T\Xi/n$ whose expectations
$\sigma_j^2$ do not vanish. If $\sigma_j^2$ can be estimated from
the data, it is natural to make a bias correction. This leads to a
new estimator $\hat \theta$ defined as a solution of the
minimization problem
\begin{equation}\label{cmu}
\min\{|\t|_1: \,\, \t\in\Th, \,
\Big|\frac{1}{n}Z^T(y-Z\t)+\widehat{D}\theta\Big|_\infty\le\mu|\t|_1
+\tau\},
\end{equation}
where $\widehat{D}$ is the diagonal matrix with entries
$\hat{\sigma}_j^2$, which are estimators of $\sigma_j^2$, and
$\mu\geq 0$ and $\tau\geq 0$ are constants that will be specified
later. This estimator $\hat \theta$ will be called the Compensated
MU selector. In this paper, we show both theoretically and
numerically that the estimator $\hat \theta$ achieves better
performance than the original MU selector~$\hat \theta^{MU}$. In
particular, under natural conditions given below, the bounds on the
error of the Compensated MU selector decrease as $O(n^{-1/2})$ up to
logarithmic factors as $n\to\infty$, whereas for the original MU
selector $\hat \theta^{MU}$ the corresponding bounds do not decrease
with $n$ and can be only small if the noise $\Xi$ is small.

\begin{remark}\label{rem:1}
The problem (\ref{cmu}) is equivalent to
\begin{equation}\label{i4}
\min_{(\theta, u)\in W(\mu,\tau)} |\theta|_1,
\end{equation}
where
\begin{equation}\label{i5}
W(\mu,\tau)=\Big\{(\theta,u) \in \Theta \times{\bf R}^p:
\left|\frac1{n}Z^T(y-Z\theta)+ \hat D \theta + u\right|_\infty \le
\tau, \  |u|_\infty \le \mu |\theta|_1\Big\},
\end{equation}
with the same $\mu$ and $\tau$ as in (\ref{cmu}) (see the proof in
Section \ref{sec:preuve}). This simplifies in some cases the
computation of the solution.
\end{remark}

An important example where the values $\sigma_j^2$ can be estimated
is given by the model with missing data. Assume that the elements
$X_{ij}$ of the matrix $X$ are unobservable, and we can only observe
\begin{equation}\label{ex1}
\tilde Z_{ij} = X_{ij}\eta_{ij},\quad i=1,\dots,n,\ j=1,\dots,p,
\end{equation}
where for each fixed $j=1,\dots,p$, the factors $\eta_{ij},
i=1,\dots,n,$ are i.i.d. Bernoulli random variables taking value 1
with probability $1-\pi_j$ and 0 with probability $\pi_j$,
$0<\pi_j<1$. The data $X_{ij}$ is missing if $\eta_{ij}=0$, which
happens with probability $\pi_j$. We can rewrite (\ref{ex1}) in the
form
\begin{equation}\label{ex2}
Z_{ij} = X_{ij}+ \Xi_{ij},
\end{equation}
where $Z_{ij}={\tilde Z}_{ij}/(1-\pi_j)$,
$\Xi_{ij}=X_{ij}(\eta_{ij}-(1-\pi_j))/(1-\pi_j)$. Thus, we can
reduce the model with missing data (\ref{ex1}) to the form
(\ref{00}) with a matrix $\Xi$ whose elements $\Xi_{ij}$ have zero
mean and variance $X_{ij}^2\pi_j/(1-\pi_j)$. So,
\begin{equation}\label{ex3}
\sigma_j^2 = \frac1{n}\sum_{i=1}^n X_{ij}^2 \,
\frac{\pi_j}{1-\pi_j}.
\end{equation}
In Section~\ref{sec:stoch} below, we show that when the $\pi_j$ are known, the $\sigma_j^2$ admit
good data-driven estimators $\hat\sigma_j^2$. If the $\pi_j$ are
unknown, they can be readily estimated by the empirical frequencies
of 0 that we further denote by $\hat \pi_j$. Then the $Z_{ij}={\tilde
Z}_{ij}/(1-\pi_j)$ appearing in (\ref{ex2}) are not available and
should be replaced by $Z_{ij}={\tilde Z}_{ij}/(1-\hat\pi_j)$. This
slightly changes the model and implies a minor modification of the
estimator (cf. Section~\ref{sec:stoch}).²²

\section{Definitions and notation}\label{sec:def}

Consider the following random matrices
$$M^{(1)}=\frac{1}{n}X^T\Xi,~M^{(2)}=\frac{1}{n}X^T\xi,~M^{(3)}=\frac{1}{n}\Xi^T\xi,$$
$$M^{(4)}=\frac{1}{n}(\Xi^T\Xi-\text{Diag}\{\Xi^T\Xi\}),
~M^{(5)}=\frac{1}{n}\text{Diag}\{\Xi^T\Xi\}-D,
$$
where $D$ is the diagonal matrix with diagonal elements
$\sigma_j^2$, $j=1,\dots,p$, and for a square matrix $A$, we denote
by $\text{Diag}\{A\}$ the matrix with the same dimensions as $A$,
the same diagonal elements as $A$ and all off-diagonal elements
equal to zero.

Under conditions that will be specified below, the entries of the
matrices $M^{(k)}$ are small with probability close to 1. Bounds on
the $\ell_\infty$-norms of the matrices $M^{(k)}$ characterize the
stochastic error of the estimation. The accuracy of the estimators
is determined by these bounds and by the properties of the Gram
matrix
$$
\Psi \triangleq \frac1{n}X^TX.
$$
For a vector $\t$, we denote by $\t_J$ the vector in $\bR^p$ that
has the same coordinates as $\t$ on the set of indices $J\subset
\{1,\hdots,p\}$ and zero coordinates on its complement~$J^c$. We
denote by $|J|$ the cardinality of $J$.

To state our results in a general form, we follow Gautier and
Tsybakov~(2011) and introduce the sensitivity characteristics
related to the action of the matrix $\Psi$ on the cone
$$
C_{J}\triangleq\left\{\Delta\in\bR^p:\ |\Delta_{J^c}|_1\le
|\Delta_{J}|_1 \right\},
$$
where $J$ is a subset of $\{1,\hdots,p\}$. For $q\in[1,\infty]$ and
an integer $s\in[1,p]$, we define the {\em $\ell_q$ sensitivity} as
follows:
$$
\kappa_{q}(s)\triangleq\min_{J: \ |J|\le s} \left(\min_{\Delta\in
C_J:\ |\Delta|_q=1} \left|\Psi\Delta \right|_{\infty}\right).
$$
We will also consider the {\em coordinate-wise sensitivities}
\begin{equation*}
\kappa_{k}^*(s)\triangleq \min_{J: \ |J|\le s} \left(\min_{\Delta\in
C_J: \ \Delta_k= 1} \left|\Psi\Delta \right|_{\infty}\right),
\end{equation*}
where $\Delta_k$ is the $k$th coordinate of $\Delta$, $k=1,\dots,p$.
To get meaningful bounds for various types of estimation errors, we
will need the positivity of $\kappa_{q}(s)$ or $\kappa_{k}^*(s)$. As
shown in Gautier and Tsybakov~(2011), this requirement is weaker
than the usual assumptions related to the structure of the Gram
matrix $\Psi$, such as the Restricted Eigenvalue assumption and the
Coherence assumption. For completeness, we recall these two
assumptions.

\smallskip

{\bf Assumption RE($s$).} {\it Let $1\le s \le p$. There exists a
constant $\kappa_{\rm RE}(s)>0$ such that
$$
\min_{\D\in C_J\setminus \{0\} }\frac{|\D^T\Psi \D|}{|\D_J|_2^2} \ge
\kappa_{\rm RE}(s)
$$
for all subsets $J$ of $\{1,\dots,p\}$ of cardinality $|J|\le s$.}

\smallskip

{\bf Assumption C.} {\it All the diagonal elements of $\Psi$ are
equal to 1 and all its off-diagonal elements of $\Psi_{ij}$ satisfy
the coherence condition:
$\max_{i\neq j} |\Psi_{ij}|\le \rho$ for some $\rho<1$.}\\

Note that Assumption C with $\rho<(3s)^{-1}$ implies Assumption
RE($s$) with $\kappa_{\rm RE}(s)=\sqrt{1-3\rho s}$, see Bickel et al.~(2009) 
or Lemma 2 in Lounici~(2008). From Proposition~4.2 of Gautier and
Tsybakov~(2011) we get that, under Assumption C with $\rho<(2
s)^{-1}$,
\begin{equation}\label{k1}
\kappa_\infty(s)\ge 1-2\rho s,
\end{equation}
which yields the control of the sensitivities $\kappa_q(s)$ for all
$1\le q\le\infty$ since
\begin{equation}\label{k2}
\kappa_q(s) \ge (2s)^{-1/q}\kappa_\infty(s), \quad \forall \ 1\le
q\le\infty,
\end{equation}
by Proposition~4.1 of Gautier and Tsybakov~(2011). Furthermore,
Proposition~9.2 of Gautier and Tsybakov~(2011) implies that, under
Assumption RE($s$),
\begin{equation}\label{k3}
\kappa_{1}(s) \ge (4s)^{-1}\kappa_{\rm RE}(s),
\end{equation}
 and by
Proposition~9.3 of that paper, under Assumption RE($2s$) for any
$s\le p/2$ and any $1< q\le 2$, we have
\begin{equation}\label{k4}
\kappa_{q}(s) \ge C(q) s^{-1/q}\kappa_{\rm RE}(2s),
\end{equation}
where $C(q)=2^{-1/q-1/2} \big(1 +
\left(q-1\right)^{-1/q}\big)^{-1}$.

\section{Main results}\label{sec:main}

In this section, we give bounds on the estimation and prediction
errors of the Compensated MU selector. For $\varepsilon\ge 0$, we
consider the thresholds $b(\varepsilon)\ge 0$ and
$\delta_i(\varepsilon)\ge 0$, $i=1,\dots, 5$, such that
\begin{equation}\label{pr1}
\mathbb{P}\big(\max_{j=1,\dots,p}|\hat{\sigma}_j^2-\sigma_j^2|\geq
b(\e)\big)\leq \varepsilon,
\end{equation}
and
\begin{equation}\label{pr2}
\mathbb{P}(|M^{(i)}|_{\infty}\geq \delta_i(\varepsilon))\leq
\varepsilon,\quad i=1,\ldots,5.
\end{equation}
Define
$$
\mu(\varepsilon)=\delta_1(\varepsilon)+\delta_4(\varepsilon)
+\delta_5(\varepsilon)+b(\varepsilon),\quad
\tau(\e)=\delta_2(\varepsilon)+ \delta_3(\varepsilon),
$$
and
$\mathcal{A}(\varepsilon)=\mathcal{A}(\mu(\varepsilon),\tau(\e)),$
where
\begin{equation}\label{premu_sel2}
\mathcal{A}(\mu,\tau)\triangleq\Big\{\theta\in\Theta: \,
\Big|\frac{1}{n}Z^T(y-Z\theta)+\widehat D\theta\Big|_{\infty}\leq
\mu|\theta|_1+\tau\Big\},\quad \forall \ \mu,\tau\ge0,
\end{equation} and $\Theta$ is a given subset of $\mathbb{R}^p$. For
$\varepsilon\ge 0$, the Compensated MU selector is defined as a
solution of the minimization problem
\begin{equation}\label{mu_sel2}
\min\{ |\t|_1: \,\, \t\in\mathcal{A}(\varepsilon)\},
\end{equation} We have the
following result.

\begin{theorem}\label{t1}
Assume that model (\ref{0})--(\ref{00}) is valid with an $s$-sparse
vector of parameters $\t^*\in \Th$, where $\Th$ is a given subset of
$\mathbb{R}^p$. For $\varepsilon\ge 0$, set
$$
\nu(\varepsilon)=2\big(\mu(\varepsilon)
+\delta_1(\varepsilon)\big)|\theta^*|_1+2\tau(\e).
$$
Then, with probability at least $1-6\varepsilon$, the set
$\mathcal{A}(\varepsilon)$ is not empty and for any solution
$\hat\t$ of \eqref{mu_sel2} we have
\begin{eqnarray}\label{t1:1}
&&|\hat\t-\t^*|_q\le \frac{\nu(\varepsilon)}{\kappa_q(s)}\,, \quad
\forall \ 1\le q\le \infty,
\\
&&|\hat\t_k-\t^*_k| \le
\frac{\nu(\varepsilon)}{\kappa_k^*(s)}\,,\quad \forall \ 1\le k \le
p, \label{t1:2}
\\
&&\frac{1}{n}|X(\hat\t-\t^*)|_2^2\le\min\Big\{
\frac{\nu^2(\varepsilon)}{\kappa_1(s)},\,
2\nu(\varepsilon)|\theta^*|_1 \Big\}\,.\label{t1:3}
\end{eqnarray}
\end{theorem}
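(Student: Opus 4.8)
The plan is to establish the result in three stages: first show that $\theta^*$ itself satisfies the constraint defining $\mathcal{A}(\varepsilon)$ (so the feasible set is nonempty and the minimizer exists on the high-probability event), then show that the difference $\Delta = \hat\theta - \theta^*$ lies in the cone $C_J$ for the support set $J$ of $\theta^*$, and finally apply the sensitivity bounds to convert a control on $|\Psi\Delta|_\infty$ into the stated norm bounds.

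**First I would** define the event $\mathcal{E}$ on which all six probabilistic bounds hold simultaneously: the five bounds in \eqref{pr2} and the one in \eqref{pr1}. By a union bound, $\mathbb{P}(\mathcal{E}) \ge 1 - 6\varepsilon$. The key algebraic identity to exploit is the decomposition of $\frac{1}{n}Z^T(y - Z\theta) + \widehat{D}\theta$ when evaluated at $\theta = \theta^*$. Substituting $y = X\theta^* + \xi$ and $Z = X + \Xi$ and expanding, one obtains
\begin{equation*}
\frac{1}{n}Z^T(y - Z\theta^*) + \widehat{D}\theta^* = -M^{(1)}\theta^* + M^{(2)} + M^{(3)} - M^{(4)}\theta^* - M^{(5)}\theta^* + (\widehat{D} - D)\theta^*,
\end{equation*}
where the $\frac{1}{n}X^TX\theta^* - \frac{1}{n}X^TX\theta^*$ terms cancel and the diagonal part $D$ is compensated exactly by $\widehat{D}$ up to the estimation error $\widehat{D} - D$. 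On $\mathcal{E}$, taking $\ell_\infty$-norms and using $|M^{(i)}\theta^*|_\infty \le |M^{(i)}|_\infty |\theta^*|_1$ together with $|(\widehat{D}-D)\theta^*|_\infty \le b(\varepsilon)|\theta^*|_1$, I would bound the right-hand side by $\mu(\varepsilon)|\theta^*|_1 + \tau(\varepsilon)$, which is exactly the constraint in \eqref{premu_sel2}. Hence $\theta^* \in \mathcal{A}(\varepsilon)$, so the set is nonempty and $|\hat\theta|_1 \le |\theta^*|_1$.

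**Next I would** derive the cone inclusion. Since both $\hat\theta$ and $\theta^*$ satisfy the constraint, the triangle inequality applied to the two versions of the constraint yields a bound on $|\Psi\Delta|_\infty$ in terms of the noise quantities and $|\hat\theta|_1, |\theta^*|_1$. More precisely, writing out $\frac{1}{n}Z^T(y - Z\hat\theta) + \widehat{D}\hat\theta$ and comparing with the $\theta^*$ expansion, the leading term is $\Psi\Delta = \frac{1}{n}X^TX\Delta$, and the combined noise and constraint terms are controlled by $\mu(\varepsilon)(|\hat\theta|_1 + |\theta^*|_1) + 2\tau(\varepsilon)$ plus an additional $\delta_1(\varepsilon)$-type contribution from $M^{(1)}\Delta$. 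Using $|\hat\theta|_1 \le |\theta^*|_1$ and the sparsity of $\theta^*$, the inequality $|\hat\theta|_1 \le |\theta^*|_1$ forces $|\Delta_{J^c}|_1 \le |\Delta_J|_1$, i.e. $\Delta \in C_J$ with $|J| \le s$. Simultaneously I would arrive at the master bound $|\Psi\Delta|_\infty \le \nu(\varepsilon)$, where the definition $\nu(\varepsilon) = 2(\mu(\varepsilon) + \delta_1(\varepsilon))|\theta^*|_1 + 2\tau(\varepsilon)$ is precisely what emerges from collecting all the noise terms.

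**The hard part will be** the bookkeeping in this second step — tracking exactly how the $M^{(1)}\Delta$ term (which scales with $|\Delta|_1$ rather than $|\theta^*|_1$) is absorbed, since $\Delta$ is the unknown error we are trying to bound. I anticipate this requires care: one must use the cone property to write $|\Delta|_1 \le 2|\Delta_J|_1$ and then either absorb the $\delta_1|\Delta|_1$ term into the left-hand side via $\kappa_q$ or bound it crudely, which explains the factor involving $\delta_1(\varepsilon)$ appearing separately in $\nu(\varepsilon)$. Once $\Delta \in C_J$ and $|\Psi\Delta|_\infty \le \nu(\varepsilon)$ are both in hand, the three conclusions follow immediately from the definitions: \eqref{t1:1} from $\kappa_q(s) \le |\Psi\Delta|_\infty/|\Delta|_q$, \eqref{t1:2} from the analogous coordinate-wise sensitivity $\kappa_k^*(s)$, and \eqref{t1:3} from the identity $\frac{1}{n}|X\Delta|_2^2 = \Delta^T\Psi\Delta \le |\Delta|_1 |\Psi\Delta|_\infty$, combined for the second alternative with the crude bound $|\Delta|_1 \le 2|\theta^*|_1$ and $\Delta^T\Psi\Delta \le |\Delta|_1 \nu(\varepsilon)$.
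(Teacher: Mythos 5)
Your proposal is correct and follows essentially the same route as the paper: first feasibility of $\theta^*$ via the decomposition of $\frac1n Z^T(y-Z\theta^*)+\widehat D\theta^*$ into the matrices $M^{(1)},\dots,M^{(5)}$ and $\widehat D - D$ (the paper's Lemma \ref{lem3}), then the master bound $|\Psi\Delta|_\infty\le\nu(\varepsilon)$ using $|\hat\theta|_1\le|\theta^*|_1$ and $|\Delta|_1\le 2|\theta^*|_1$ (Lemma \ref{lem4}), and finally the cone property and the sensitivities, with \eqref{t1:3} obtained from $\frac1n|X\Delta|_2^2\le|\Psi\Delta|_\infty|\Delta|_1$ exactly as in the paper. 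One small caution on the step you flag as delicate: applying the triangle inequality symmetrically to the constraints at $\theta^*$ and $\hat\theta$ bounds $|\widehat\Psi\Delta|_\infty$ with $\widehat\Psi=\frac1n Z^TZ-\widehat D$, and passing from $\widehat\Psi$ to $\Psi$ then costs an extra $(\mu(\varepsilon)+\delta_1(\varepsilon))|\Delta|_1$, which would double the $\mu$-term; to land exactly on $\nu(\varepsilon)$ you must, as the paper does, expand around $\Psi\Delta$ invoking the constraint only at $\hat\theta$, so that the remaining noise terms carry $|\hat\theta|_1$ rather than $|\Delta|_1$, leaving only the single $M^{(1)}$-type term multiplied by $|\Delta|_1$.
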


The proof of this theorem is given in Section \ref{sec:preuve}.

\smallskip

Note that (\ref{t1:3}) contains a bound on the prediction error
under no assumption on $X$:
$$
\frac{1}{n}|X(\hat\t-\t^*)|_2^2\le 2\nu(\varepsilon)|\theta^*|_1 \,.
$$
The other bounds in Theorem \ref{t1} depend on the sensitivities.
Using (\ref{k1}) -- (\ref{k4}) we obtain the following corollary of
Theorem \ref{t1}.

\begin{theorem}\label{t2}
Let the assumptions of Theorem~\ref{t1} be satisfied. Then, with
probability at least $1-6\varepsilon$, for any solution $\hat\t$ of
\eqref{mu_sel2} we have the following inequalities. \vspace{1mm}

(i) Under Assumption RE($s$):
\begin{eqnarray}\label{t2:1}
|\hat\t-\t^*|_1&\le& \frac{4\nu(\varepsilon) s}{\kappa_{\rm
RE}(s)}\,,
\\
\frac{1}{n}|X(\hat\t-\t^*)|_2^2&\le& \frac{4\nu^2(\varepsilon)
s}{\kappa_{\rm RE}(s)}\,.\label{t2:2}
\end{eqnarray}
(ii) Under Assumption RE($2s$), $s\le p/2$:
\begin{eqnarray}\label{t2:3}
|\hat\t-\t^*|_q&\le& \frac{4\nu(\varepsilon)s^{1/q}}{\kappa_{\rm
RE}(2s)}\,, \quad \forall \,1< q\le 2.
\end{eqnarray}
(iii) Under Assumption C with $\rho<\frac1{2s}$:
\begin{eqnarray}\label{t2:4} |\hat\t-\t^*|_q&<&
\frac{(2s)^{1/q}\nu(\varepsilon)}{1-2\rho s} \,, \quad \forall \
1\le q\le \infty,
\end{eqnarray}
where we set $1/\infty =0$.
\end{theorem}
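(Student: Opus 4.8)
The plan is to obtain Theorem~\ref{t2} as a direct corollary of Theorem~\ref{t1}, by inserting into the bounds \eqref{t1:1} and \eqref{t1:3} the lower estimates on the sensitivities $\kappa_q(s)$ and $\kappa_1(s)$ furnished by \eqref{k1}--\eqref{k4}. All three parts live on the same event of probability at least $1-6\varepsilon$ as Theorem~\ref{t1}, so the probabilistic statement is inherited with no extra work. Write $\Delta=\hat\t-\t^*$, and note at the outset that, since $\{J:|J|\le s\}\subset\{J:|J|\le 2s\}$, Assumption RE($2s$) implies Assumption RE($s$) with $\kappa_{\rm RE}(s)\ge\kappa_{\rm RE}(2s)$; this monotonicity will let me use the $\ell_1$ bound of part (i) under the weaker-looking hypothesis of part (ii) as well.

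For part (i), I take $q=1$ in \eqref{t1:1} and combine it with \eqref{k3}, i.e.\ $\kappa_1(s)\ge(4s)^{-1}\kappa_{\rm RE}(s)$, so that $1/\kappa_1(s)\le 4s/\kappa_{\rm RE}(s)$; this gives \eqref{t2:1} and, after the same substitution into the first term of the minimum in \eqref{t1:3}, the prediction bound \eqref{t2:2}. For part (iii), I chain \eqref{k2} and \eqref{k1} to get $\kappa_q(s)\ge(2s)^{-1/q}\kappa_\infty(s)\ge(2s)^{-1/q}(1-2\rho s)$, where the right-hand side is positive because $\rho<(2s)^{-1}$; substituting $1/\kappa_q(s)\le(2s)^{1/q}/(1-2\rho s)$ into \eqref{t1:1} yields \eqref{t2:4} for every $1\le q\le\infty$, with the convention $1/\infty=0$.

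The genuinely delicate point is part (ii). A naive substitution of \eqref{k4} into \eqref{t1:1} produces the factor $1/C(q)=2^{1/q+1/2}\big(1+(q-1)^{-1/q}\big)$, which equals $4$ only at $q=2$ and strictly exceeds $4$ for $1<q<2$ (indeed it blows up as $q\downarrow1$); hence \eqref{k4} alone does \emph{not} deliver the clean constant $4$ claimed in \eqref{t2:3}. The remedy is to interpolate rather than apply \eqref{k4} pointwise. I first record two endpoint bounds, both written through $\kappa_{\rm RE}(2s)$: the $\ell_2$ bound $|\Delta|_2\le 4\nu(\e)s^{1/2}/\kappa_{\rm RE}(2s)$, from \eqref{t1:1} at $q=2$ together with \eqref{k4} at $q=2$ (where $C(2)=1/4$ gives exactly the factor $4$); and the $\ell_1$ bound $|\Delta|_1\le 4\nu(\e)s/\kappa_{\rm RE}(2s)$, from part (i) and the monotonicity $\kappa_{\rm RE}(s)\ge\kappa_{\rm RE}(2s)$. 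Then I apply the Littlewood (log-convexity) interpolation inequality $|\Delta|_q\le|\Delta|_1^{\,2/q-1}\,|\Delta|_2^{\,2-2/q}$, valid for $1\le q\le 2$. The exponents of the common factor $4\nu(\e)/\kappa_{\rm RE}(2s)$ sum to $(2/q-1)+(2-2/q)=1$, while the powers of $s$ combine to $(2/q-1)\cdot1+(2-2/q)\cdot\tfrac12=1/q$, giving precisely $|\Delta|_q\le 4\nu(\e)s^{1/q}/\kappa_{\rm RE}(2s)$, which is \eqref{t2:3}. I expect this interpolation step---recognizing that \eqref{k4} must be bypassed in favour of interpolating between the $\ell_1$ and $\ell_2$ endpoints---to be the main obstacle; parts (i) and (iii) are then routine substitutions.
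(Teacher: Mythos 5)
Your proposal is correct and follows essentially the same route as the paper: parts (i) and (iii) by direct substitution of \eqref{k3} and \eqref{k1}--\eqref{k2} into Theorem~\ref{t1}, and part (ii) by establishing the $\ell_2$ endpoint via \eqref{k4} at $q=2$ (where $C(2)=1/4$) and then interpolating with the $\ell_1$ bound using $|\Delta|_q^q\le|\Delta|_1^{2-q}|\Delta|_2^{2(q-1)}$ and $\kappa_{\rm RE}(s)\ge\kappa_{\rm RE}(2s)$. Your explicit observation that a pointwise application of \eqref{k4} would give the worse constant $1/C(q)>4$ for $1<q<2$ correctly identifies why the paper interpolates instead.
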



\smallskip

If the components of $\xi$ and $\Xi$ are subgaussian, the values
$\delta_i(\varepsilon)$ are of order $O(n^{-1/2})$ up to logarithmic
factors, and the value $b(\varepsilon)$ is of the same order in the
model with missing data (see Section \ref{sec:stoch}). Then, the
bounds for the Compensated MU selector in Theorem~\ref{t2} are
decreasing with rate $n^{-1/2}$ as $n\to \infty$. This is an
advantage of the Compensated MU selector as compared to the original
MU selector $\hat \theta^{MU}$, for which the corresponding bounds
do not decrease with $n$ and can be small only if the noise $\Xi$ is
small (cf. Rosenbaum and Tsybakov~(2010)).

If the matrix $X$ is observed without error ($\Xi=0$), then
$\mu(\varepsilon)=0$, $\delta_i(\varepsilon)=0, \ i\ne 2$, and the
Compensated MU selector coincides with the Dantzig selector. In this
particular case, the results (ii) and (iii) of Theorem~\ref{t2}
improve, in terms of the constants or the range of validity, upon
the corresponding bounds in Bickel et al.~(2009) and Lounici~(2008).


\section{Control of the stochastic error terms}\label{sec:stoch}

Theorems \ref{t1} and \ref{t2} are stated with general thresholds
$\delta_i(\varepsilon)$ and $b(\varepsilon)$, and can be used both
for random or deterministic noises $\xi, \Xi$ (in the latter case,
$\varepsilon=0$) and random or deterministic $X$. In this section,
considering $\varepsilon>0$ we first derive the values
$\delta_i(\varepsilon)$ for random $\xi$ and $\Xi$ with subgaussian
entries, and then we specify $b(\varepsilon)$ and the matrix
$\widehat D$ for the model with missing data. Note that, for random
$\xi$ and  $\Xi$, the values $\delta_i(\varepsilon)$ and
$b(\varepsilon)$ characterize the stochastic error of the estimator.

\subsection{Thresholds $\delta_i(\varepsilon)$ under subgaussian noise}

Recall that a zero-mean random variable $W$ is said to be
$\gamma$-subgaussian ($\gamma>0$) if, for all $t\in\mathbb{R}$,
\begin{equation}\label{subgauss}
\E[\text{exp}(tW)]\leq
\text{exp}(\gamma^2t^2/2).
\end{equation}
In particular, if $W$ is a zero-mean gaussian or bounded random
variable, it is subgaussian. A zero-mean random variable $W$ will be
called $(\gamma, t_0)$-subexponential if there exist $\gamma>0$ and
$t_0>0$ such that
\begin{equation}\label{subexp}
\E[\text{exp}(tW)]\leq \text{exp}(\gamma^2t^2/2), \quad \forall \
|t|\le t_0.
\end{equation}
Let the noise terms $\xi$ and $\Xi$ satisfy the following
assumption.

\smallskip


{\bf Assumption N.} {\it Let $\gamma_{\Xi}>0$, $\gamma_{\xi}>0$. The
entries $\Xi_{ij}$, $i=1,\dots,n, \ j=1,\dots,p,$ of the matrix $\Xi$
are zero-mean $\gamma_{\Xi}$-subgaussian random variables, the $n$
rows of $\Xi$ are independent, and $\ \E(\Xi_{ij}\Xi_{ik})=0$ for
$j\ne k$, $i=1,\dots,n$. The components $\xi_i$ of the vector $\xi$ are
independent zero-mean $\gamma_{\xi}$-subgaussian random variables
satisfying $\ \E(\Xi_{ij}\xi_i)=0$, $i=1,\dots,n,\ j=1,\dots,p$.}

\smallskip

Assumption N implies that the random variables $\Xi_{ij}\xi_{i}$,
$\Xi_{ij}\Xi_{ik}$ are subexponential. Indeed, if two random
variables $\zeta$ and $\eta$ are subgaussian, then for some $c>0$ we
have $\E \exp(c\zeta\eta)<\infty$, which implies that (\ref{subexp})
holds for $W=\zeta\eta$ with some $\gamma,t_0$ whenever
$\E(\zeta\eta)=0$, cf., e.g., Petrov~(1995), page 56.

Next, $\zeta_j\triangleq(1/n)\sum_{i=1}^n\Xi_{ij}^2-\sigma_j^2$ is a
zero-mean subexponential random variable with variance $O(1/n)$. It
is easy to check that (\ref{subexp}) holds for $W=\zeta_j$ with
$\gamma= O(1/\sqrt{n})$ and $t_0=O(n)$.

To simplify the notation, we will use a rougher evaluation valid
under Assumption N, namely that all $\Xi_{ij}\xi_{i}$,
$\Xi_{ij}\Xi_{ik}$ are $(\gamma_0, t_0)$-subexponential with the
same $\gamma_0>0$ and $t_0>0$, and all $\zeta_j$ are
$(\gamma_0/\sqrt{n}, t_0 n)$-subexponential. Here the constants
$\gamma_0$ and $t_0$ depend only on $\gamma_{\Xi}$ and
$\gamma_{\xi}$. For $0<\varepsilon<1$ and an integer $N$, set
$$
\bar \delta (\varepsilon,N) = \max\left(\gamma_0
\sqrt{\frac{2\log(N/\varepsilon)}{n}},\
\frac{2\log(N/\varepsilon)}{t_0n}\right)\,.
$$
\begin{lemma}\label{lem:subexp} Let Assumption N be satisfied, and
let $X$ be a deterministic matrix  with $\max_{1\le j\le
p}\frac1{n}\sum_{i=1}^n X_{ij}^2 \triangleq m_2$. Then for any
$0<\varepsilon<1$ the bound (\ref{pr2}) holds with
\begin{eqnarray}\label{eq:lem:subexp}
&&\delta_1(\varepsilon)=\gamma_{\Xi}
\sqrt{\frac{2m_2\log(2p^2/\varepsilon)}{n}}, \quad
\delta_2(\varepsilon)=\gamma_{\xi}
\sqrt{\frac{2m_2\log(2p/\varepsilon)}{n}},\\
&& \delta_3(\varepsilon)=\delta_5(\varepsilon)=\bar \delta
(\varepsilon,2p), \quad \delta_4(\varepsilon)=\bar \delta
(\varepsilon,p(p-1)).
\end{eqnarray}
\end{lemma}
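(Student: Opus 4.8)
The plan is to treat each matrix $M^{(i)}$ entrywise: every entry is a normalized sum of independent random variables, so I would combine a single-variable concentration inequality with a union bound over the entries, being careful to count entries and to track the factor $2$ coming from two-sided tails.

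First I would record the two tail bounds that drive everything. If $W$ is $\gamma$-subgaussian then $\mathbb{P}(|W|\geq t)\leq 2\exp(-t^2/(2\gamma^2))$. If $W_1,\dots,W_n$ are independent and $(\gamma_0,t_0)$-subexponential, then the average $S=\frac1n\sum_{i=1}^n W_i$ obeys the Bernstein-type bound
$$
\mathbb{P}(|S|\geq t)\leq 2\exp\Big(-n\min\big(t^2/(2\gamma_0^2),\,t_0t/2\big)\Big),
$$
obtained by optimizing the Chernoff exponent $n(\gamma_0^2\lambda^2/2-\lambda t)$ over $0<\lambda\le t_0$. The quantity $\bar\delta(\varepsilon,N)$ is exactly the threshold $t$ at which the right-hand side equals $\varepsilon/N$: requiring both $nt^2/(2\gamma_0^2)\ge\log(N/\varepsilon)$ and $nt_0t/2\ge\log(N/\varepsilon)$ produces the two arguments of the maximum.

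Then I would go through the five matrices. For $M^{(1)}$ the $(j,k)$ entry is $\frac1n\sum_{i=1}^n X_{ij}\Xi_{ik}$; since $X$ is deterministic and the rows of $\Xi$ are independent, this is a sum of independent subgaussian terms $X_{ij}\Xi_{ik}$, hence $\gamma_{\Xi}\sqrt{m_2/n}$-subgaussian because $\frac1n\sum_i X_{ij}^2\le m_2$. The subgaussian tail together with a union bound over the $p^2$ entries, the factor $2$ being absorbed into $\log(2p^2/\varepsilon)$, yields $\delta_1(\varepsilon)$; $M^{(2)}$ is identical with $\xi_i$ in place of $\Xi_{ik}$ and only $p$ entries, giving $\delta_2(\varepsilon)$. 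For $M^{(3)}$ the $j$th entry is $\frac1n\sum_i\Xi_{ij}\xi_i$, an average of independent $(\gamma_0,t_0)$-subexponential summands (independence across $i$ from the row-independence of $\Xi$ and independence of the $\xi_i$; zero mean and subexponentiality from $\E(\Xi_{ij}\xi_i)=0$ together with the subgaussian-product fact recorded under Assumption N), so the Bernstein bound and a union bound over $N=2p$ give $\delta_3(\varepsilon)=\bar\delta(\varepsilon,2p)$. The matrix $M^{(4)}$ is handled the same way with summands $\Xi_{ij}\Xi_{ik}$, $j\neq k$; the key counting point is that $\Xi^T\Xi$ is symmetric, so there are only $p(p-1)/2$ distinct off-diagonal entries, and the two-sided union bound produces $N=p(p-1)$, i.e. $\delta_4(\varepsilon)=\bar\delta(\varepsilon,p(p-1))$. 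Finally, for $M^{(5)}$ each diagonal entry is exactly $\zeta_j=\frac1n\sum_i\Xi_{ij}^2-\sigma_j^2$, which is $(\gamma_0/\sqrt n,\,t_0n)$-subexponential; feeding these effective parameters into the single-variable subexponential tail reproduces the very same per-entry bound $2\exp(-n\min(t^2/(2\gamma_0^2),\,t_0t/2))$ as for $M^{(3)}$, so a union bound over $2p$ entries again gives $\delta_5(\varepsilon)=\bar\delta(\varepsilon,2p)$.

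I expect the Chernoff optimization to be routine, and the only steps demanding care to be (a) justifying, from Assumption N, the independence and zero mean of the summands so that the product variables are genuinely subexponential, and (b) the bookkeeping of the union-bound cardinalities $N$ — in particular exploiting the symmetry of $\Xi^T\Xi$ to obtain $p(p-1)$ rather than $2p(p-1)$ for $M^{(4)}$, and consistently accounting for the factor $2$ from two-sided tails throughout.
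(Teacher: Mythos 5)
Your proposal is correct and follows exactly the route the paper sketches in its one-line proof: per-entry subgaussian or Bernstein-type subexponential tail bounds combined with a union bound, with the cardinalities $N$ chosen to absorb the two-sided factor $2$ (and, for $M^{(4)}$, the symmetry of $\Xi^T\Xi$). The detailed bookkeeping you supply — the subgaussian parameter $\gamma_\Xi\sqrt{m_2/n}$ for the entries of $M^{(1)}$, the Chernoff optimization over $0<\lambda\le t_0$ yielding the two arguments of the maximum in $\bar\delta(\varepsilon,N)$, and the counts $p^2$, $p$, $2p$, $p(p-1)$ — all check out against the stated thresholds.
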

\begin{proof} Use the union  bound and the facts that
$\mathbb{P}(W > \delta)\le \exp(-\delta^2/(2\gamma^2))$ for a
$\gamma$-subgaussian $W$, and $\mathbb{P}( \frac1{n}\sum_{i=1}^n
W_i> \delta)\le \max\big(\exp(-n\delta^2/(2\gamma^2)), \exp (-\delta
t_0n/2)\big)$ for a sum of independent $(\gamma,t_0)$-subexponential
$W_i$.
\end{proof}

\subsection{Data-driven $\widehat D$ and $b(\varepsilon)$ for the model with missing data}

Consider now the model with missing data (\ref{ex1}) and assume that
$X$ is non-random. Then we have ${\tilde Z}_{ij}^2 =
X_{ij}^2\eta_{ij}$, which implies:
$$
\E[{\tilde Z}_{ij}^2] = X_{ij}^2(1-\pi_j)\,, \quad j=1,\dots,p.
$$
Hence, ${\tilde Z}_{ij}^2\pi_j/(1-\pi_j)^2$ is an unbiased estimator
of $X_{ij}^2 \pi_j/(1-\pi_j)$. Then $\sigma^2_j$ defined in
(\ref{ex3}) is naturally estimated by
\begin{equation}\label{hsigm}
\hat \sigma^2_j = \frac1{n}\sum_{i=1}^n {\tilde Z}_{ij}^2\frac{
\pi_j}{(1-\pi_j)^2},
\end{equation}
 The matrix $\widehat D$ is then defined as a
diagonal matrix with diagonal entries $\hat \sigma^2_j$. It is not
hard to prove that $\hat \sigma^2_j$ approximates $\sigma^2_j$ in
probability with rate $O(n^{-1/2})$ up to a logarithmic factor.
For example, let the probability that the data is missing be the
same for all $j$: $\pi_1= \cdots = \pi_p\triangleq \pi_*$. Then
\begin{eqnarray*}
&&\mathbb{P}(|\hat{\sigma}_j^2-\sigma_j^2|\geq b) =
\mathbb{P}\left(\left|\frac1{n}\sum_{i=1}^n\Big( {\tilde
Z}_{ij}^2\frac{\pi_*}{(1-\pi_*)^2}-X_{ij}^2
\frac{\pi_*}{(1-\pi_*)}\Big)\right|\geq b\right) \\&& =
\mathbb{P}\left(\left|\frac1{n}\sum_{i=1}^n
Z_{ij}^2-\frac{X_{ij}^2}{(1-\pi_*)}\right|\geq
\frac{b}{\pi_*}\right) \le 2\exp\left(-
\frac{2nb^2(1-\pi_*)^4}{\pi_*^2m_4}\right),
\end{eqnarray*}
where we have used the fact that $0\le Z_{ij}^2\le
X_{ij}^2(1-\pi_*)^{-2}$, Hoeffding's inequality and the notation
$m_4 \triangleq \max_{1\le j\le p}\frac1{n}\sum_{i=1}^n X_{ij}^4 $.
This proves (\ref{pr1}) with
$$
b(\e) =
\frac{\pi_*}{(1-\pi_*)^{2}}\sqrt{\frac{m_4\log(2p/\e)}{2n}}\,.
$$
If $\pi_*$ is unknown, we replace it by the estimator $\hat\pi=
\frac1{np}\sum_{i,j} \mathrm{1}_{\{\tilde Z_{ij}=0\}}$, where $\mathrm{1}_{\{\cdot\}}$ denotes
the indicator function. Another difference is that $Z_{ij}={\tilde
Z}_{ij}/(1-\pi_j)$ appearing in (\ref{ex2}) are not available when
$\pi_j$'s are unknown. Therefore, we slightly modify the estimator
using ${\tilde Z}_{ij}$ instead of $Z_{ij}$; we define $\hat\theta$
as a solution of $\min\{ |\t|_1: \,\, \t\in\mathcal{\tilde
A}(\varepsilon)\}$ with
\begin{equation}\label{cmu_miss_data}
\mathcal{\tilde A}(\varepsilon)=\Big\{\theta\in\Theta: \,
\Big|\frac{1}{n}{\tilde Z}^T(y(1-\hat\pi) -{\tilde
Z}\theta)+\widehat D\theta\Big|_{\infty}\leq
\tilde\mu(\varepsilon)|\theta|_1+\tilde\tau(\e)\Big\},
\end{equation}
where $\tilde\mu(\varepsilon)$ and $\tilde\tau(\e)$ are suitably
chosen constants, $\tilde Z$ is the $n\times p$ matrix with entries
$\tilde Z_{ij}$, and $\widehat D$ is a diagonal matrix with entries
$\hat \sigma^2_j=\frac1{n}\sum_{i=1}^n {\tilde Z}_{ij}^2 \hat\pi
/(1-\hat\pi)^2.$ This modification introduces in the bounds an
additional term proportional to $\hat\pi - \pi_*$, which is of the
order $O((np)^{-1/2})$ in probability and hence is negligible as
compared to the error bound for the Compensated MU selector.

\begin{remark}
In this section, we have considered non-random $X$. Using the same
argument, it is easy to derive analogous expressions for
$\sigma_i(\e)$ and $b(\e)$ when $X$ is a random matrix with
independent sub-gaussian entries, and $\xi$, $\Xi$ are independent
from $X$.
\end{remark}

\section{Confidence intervals}\label{sec:confidence}

The bounds of Theorems \ref{t1} and \ref{t2} depend on the unknown
matrix $X$ via the sensitivities, and therefore cannot be used to
provide confidence intervals. In this section, we show how to
address the issue of confidence intervals by deriving other type of
bounds based on the empirical sensitivities. Note first that the
matrix $\widehat \Psi = \frac1{n}Z^TZ-\widehat D$ is a natural
estimator of the unknown Gram matrix $\Psi$. It is
$\sqrt{n}$-consistent in $\ell_\infty$-norm under the conditions of
the previous section. Therefore, it makes sense to define the
empirical counterparts of $\kappa_{q}(s)$ and $\kappa_{k}^*(s)$ by
the relations:
$$
\hat\kappa_{q}(s)\triangleq\min_{J: \ |J|\le s}
\left(\min_{\Delta\in C_J:\ |\Delta|_q=1} |\widehat \Psi\Delta
|_{\infty}\right),
$$
and
\begin{equation*}
\hat\kappa_{k}^*(s)\triangleq \min_{J: \ |J|\le s}
\left(\min_{\Delta\in C_J: \ \Delta_k= 1} |\widehat\Psi\Delta
|_{\infty}\right).
\end{equation*}
The values $\hat\kappa_{q}(s)$ and $\hat\kappa_{k}^*(s)$ that we
will call the {\em empirical sensitivities} can be efficiently
computed for small $s$ or, alternatively, one can compute
data-driven lower bounds on them for any $s$ using linear
programming, cf. Gautier and Tsybakov~(2011).

The following theorem establishes confidence intervals for
$s$-sparse vector $\theta^*$ based on the empirical sensitivities.

\begin{theorem}\label{t3}
Assume that model (\ref{0})--(\ref{00}) is valid with an $s$-sparse
vector of parameters $\t^*\in \Th$, where $\Th$ is a given subset of
$\mathbb{R}^p$. Then, with probability at least $1-6\varepsilon$,
 for any solution
$\hat\t$ of \eqref{mu_sel2} we have
\begin{eqnarray}\label{t3:1}
&&|\hat\t-\t^*|_q\le \frac{2(\mu(\varepsilon)|\hat\theta|_1
+\tau(\e))}{\hat\kappa_q(s)(1-\mu(\e)/\hat\kappa_1(s))_+}\,, \quad
\forall \ 1\le q\le \infty,
\\
&&|\hat\t_k-\t^*_k| \le \frac{2(\mu(\varepsilon)|\hat\theta|_1
+\tau(\e))}{\hat\kappa_k^*(s)(1-\mu(\e)/\hat\kappa_1(s))_+}\,,\quad
\forall \ 1\le k \le p, \label{t3:2}
\end{eqnarray}
where $x_+=\max(0,x)$, and we set $1/0\triangleq\infty$.
\end{theorem}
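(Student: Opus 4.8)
The plan is to reuse the geometric part of the proof of Theorem~\ref{t1} and then replace the unknown Gram matrix $\Psi$ by its observable estimator $\widehat\Psi=\frac1n Z^TZ-\widehat D$. First I would place myself on the event on which the bounds \eqref{pr1} and \eqref{pr2} for $i=1,\dots,5$ simultaneously hold; by the union bound its probability is at least $1-6\varepsilon$. Exactly as in the proof of Theorem~\ref{t1}, expanding $\frac1n Z^Ty-\widehat\Psi\theta^*$ into the matrices $M^{(1)},\dots,M^{(5)}$ and $\widehat D-D$ (the cross terms $M^{(1)\top}\theta^*$ cancel) gives $\big|\frac1n Z^T(y-Z\theta^*)+\widehat D\theta^*\big|_\infty\le\mu(\varepsilon)|\theta^*|_1+\tau(\varepsilon)$, so $\theta^*\in\mathcal A(\varepsilon)$ and $\mathcal A(\varepsilon)$ is nonempty. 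Minimality of $|\hat\theta|_1$ then yields $|\hat\theta|_1\le|\theta^*|_1$, and with $\Delta=\hat\theta-\theta^*$ and $J=\mathrm{supp}(\theta^*)$, $|J|\le s$, the reverse triangle inequality gives $|\Delta_{J^c}|_1\le|\Delta_J|_1$, i.e. $\Delta\in C_J$.

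The key new observation is that the functional controlled by the constraint defining $\mathcal A(\varepsilon)$ is precisely $\widehat\Psi\theta$ shifted by the data term, since $\frac1n Z^T(y-Z\theta)+\widehat D\theta=\frac1n Z^Ty-\widehat\Psi\theta$. Evaluating this at $\theta=\theta^*$ and $\theta=\hat\theta$ and subtracting makes $\frac1n Z^Ty$ cancel, so that $\widehat\Psi\Delta=\big(\tfrac1n Z^Ty-\widehat\Psi\theta^*\big)-\big(\tfrac1n Z^Ty-\widehat\Psi\hat\theta\big)$. Because both $\hat\theta$ and $\theta^*$ lie in $\mathcal A(\varepsilon)$, each term has $\ell_\infty$-norm at most $\mu(\varepsilon)|\cdot|_1+\tau(\varepsilon)$, whence
\[
|\widehat\Psi\Delta|_\infty\le \mu(\varepsilon)\big(|\hat\theta|_1+|\theta^*|_1\big)+2\tau(\varepsilon).
\]
To express the right-hand side through the observable $|\hat\theta|_1$ I would invoke $|\theta^*|_1\le|\hat\theta|_1+|\Delta|_1$, which gives
\[
|\widehat\Psi\Delta|_\infty\le 2\big(\mu(\varepsilon)|\hat\theta|_1+\tau(\varepsilon)\big)+\mu(\varepsilon)|\Delta|_1 .
\]

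Finally I would feed this into the definition of the empirical sensitivities, which is legitimate because $\Delta\in C_J$ with $|J|\le s$: for $q=1$ one has $\hat\kappa_1(s)|\Delta|_1\le|\widehat\Psi\Delta|_\infty$, hence $(\hat\kappa_1(s)-\mu(\varepsilon))|\Delta|_1\le 2(\mu(\varepsilon)|\hat\theta|_1+\tau(\varepsilon))$, giving $|\Delta|_1\le 2(\mu(\varepsilon)|\hat\theta|_1+\tau(\varepsilon))/(\hat\kappa_1(s)-\mu(\varepsilon))$ when $\hat\kappa_1(s)>\mu(\varepsilon)$, and a vacuous bound otherwise (matching $1/0=\infty$ and the factor $(1-\mu(\varepsilon)/\hat\kappa_1(s))_+$). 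Substituting this $|\Delta|_1$-bound back and using $\hat\kappa_q(s)|\Delta|_q\le|\widehat\Psi\Delta|_\infty$ and $\hat\kappa_k^*(s)|\Delta_k|\le|\widehat\Psi\Delta|_\infty$, together with the algebraic identity $1+\mu(\varepsilon)/(\hat\kappa_1(s)-\mu(\varepsilon))=(1-\mu(\varepsilon)/\hat\kappa_1(s))^{-1}$, yields \eqref{t3:1} and \eqref{t3:2}. The main obstacle is the self-referential term $\mu(\varepsilon)|\Delta|_1$: it cannot be dropped and must be absorbed through the $q=1$ empirical sensitivity, which is exactly what forces the denominator factor $(1-\mu(\varepsilon)/\hat\kappa_1(s))_+$ and makes the interval informative only when $\hat\kappa_1(s)>\mu(\varepsilon)$; one must also verify throughout that every quantity on the right-hand side ($|\hat\theta|_1$, $\mu(\varepsilon)$, $\tau(\varepsilon)$, $\hat\kappa_q(s)$, $\hat\kappa_1(s)$) is observable, so that the resulting bounds genuinely define confidence intervals.
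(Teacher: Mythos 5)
Your proposal is correct and follows essentially the same route as the paper: establish $\theta^*\in\mathcal A(\varepsilon)$ and the cone condition, write $\widehat\Psi\Delta$ as the difference of the constraint functional at $\theta^*$ and $\hat\theta$, bound it by $2(\mu(\varepsilon)|\hat\theta|_1+\tau(\varepsilon))+\mu(\varepsilon)|\Delta|_1$, and absorb the self-referential term via $\hat\kappa_1(s)$. The only (immaterial) difference is that you solve for $|\Delta|_1$ first and substitute back, whereas the paper bounds $\mu(\varepsilon)|\Delta|_1\le\mu(\varepsilon)|\widehat\Psi\Delta|_\infty/\hat\kappa_1(s)$ and solves directly for $|\widehat\Psi\Delta|_\infty$.
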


\begin{proof} Set $\Delta=\t^*-\hat\t$, and write for brevity
$S(\theta)=\frac1{n}Z^T(y-Z\theta)+ \hat D \theta.$ Using
Lemma~\ref{lem3} in Section~\ref{sec:preuve}, the fact that
$|\Delta_{J^c}|_1 \le |\Delta_{J}|_1$ where $J$ is the set of
non-zero components of $\t^*$ (cf. Lemma~1 in Rosenbaum and
Tsybakov~(2010)) and the definition of the empirical sensitivity
$\hat\kappa_1(s)$, we find
\begin{eqnarray*}
|\hat \Psi \Delta|_\infty &\le&
|S(\t^*)|_\infty+|S(\hat\t)|_\infty \\
&\le& \mu(\varepsilon)(|\theta^*|_1+|\hat\theta|_1) + 2\tau(\e)\\
&\le&2(\mu(\varepsilon)|\hat\theta|_1 +\tau(\e)) +
\mu(\varepsilon)|\Delta|_1
\\
&\le&2(\mu(\varepsilon)|\hat\theta|_1 +\tau(\e)) +
\frac{\mu(\varepsilon)}{\hat\kappa_1(s)}|\hat \Psi \Delta|_\infty
\end{eqnarray*}
This and the definition of $\hat\kappa_q(s)$ yield (\ref{t3:1}). The
proof of (\ref{t3:2}) is analogous, with $\hat\kappa_k^*(s)$ used
instead of $\hat\kappa_q(s)$.
\end{proof}

\begin{remark}
Note that the bounds (\ref{t3:1})--(\ref{t3:2}) remain valid for
$s'\geq s$. Therefore, if one gets an estimator $\hat{s}$ of $s$
such that $\hat{s}\geq s$ with high probability, it can be plugged
in into the bounds in order to get completely feasible confidence
intervals.
\end{remark}

\section{Simulations}\label{simu}
We consider here the model with missing data (\ref{ex1}).
Simulations in Rosenbaum and Tsybakov~(2010) indicate that in this
model the MU selector achieves better numerical performance than the
Lasso or the Dantzig selector. Here we compare the MU selector with
the Compensated MU selector.
%
%
We design the numerical experiment the following way.

\noindent $-$ We take a matrix $X$ of size $100 \times 500$ ($n=100,
p=500$) which is the normalized version (centered and then
normalized so that all the diagonal elements of the associated Gram
matrix $X^{T}X/n$ are equal to 1) of a $100
\times 500$ matrix with i.i.d. standard Gaussian entries.\\
\noindent $-$ For a given integer $s$, we randomly (uniformly)
choose $s$ non-zero elements in a vector $\theta^*$ of size $500$.
The associated coefficients $\theta_j^*$ are set to $0.5$, and all
other
coefficients are set to 0. We take $s=1,2,3,5,10$.\\
\noindent $-$ We set $y=X\theta^*+\xi$, where $\xi$ a vector with
i.i.d. zero mean and variance $\nu^2$ normal components, $\nu =0.05/1.96$. \\
\noindent $-$ We compute the values $Z_{ij}={\tilde
Z}_{ij}/(1-\pi_*)$
 with ${\tilde Z}_{ij}$ as in \eqref{ex1} \footnote{Remark that this experiment slightly differs from those in
Rosenbaum and Tsybakov~(2010) where the matrix taken in
\eqref{mu_sel} has entries ${\tilde Z}_{ij}$.}, and $\pi_j=
0.1\triangleq \pi_*$ for all $j$. (The value $\pi_*$ rather than its
empirical counterpart, which is very close to $\pi_*$, is used in
the
algorithm to simplify the computations).\\
\noindent $-$ We run a linear programming algorithm to compute the
solutions of \eqref{mu_sel} and \eqref{mu_sel2} where we optimize
over $\Theta =\mathbb{R}_{+}^{500}$. To simplify the comparison with
Rosenbaum and Tsybakov~(2010), we write $\mu$ in the form
$(1+\delta)\delta$ with $\delta=0, 0.01, 0.05, 0.075, 0.1$. In particular,
$\delta=0$ corresponds to the Dantzig selector based on the noisy
matrix~$Z$. In practice, one can use an empirical procedure of the
choice of $\delta$ described in Rosenbaum and Tsybakov~(2010). The
choice of $\tau$ is not crucial and influences only slightly the
output of the algorithm. The results presented below correspond to
$\tau$ chosen in the same way as in the numerical study in Rosenbaum
and Tsybakov~(2010).
\\
\noindent $-$ We compute the error measures
$$\text{Err}_1=|\hat{\theta}-\theta^*|_2^2\text{ and }\text{Err}_2=
|X(\hat{\theta}-\theta^*)|_2^2.$$ We also record the retrieved
sparsity pattern, which is defined as the set of the non-zero
coefficients of $\hat{\theta}$.\\
\noindent $-$ For each value of $s$ we run $100$ Monte Carlo
simulations.

Tables 1--5 present the empirical averages and standard deviations
(in brackets) of $\text{Err}_1$, $\text{Err}_2$, of the number of
non-zero coefficients in $\hat{\theta}$ ($Nb_1$) and of the number
of non-zero coefficients in $\hat{\theta}$ belonging to the true
sparsity pattern ($Nb_2$). We also present the total number of
simulations where the sparsity pattern is exactly retrieved (Exact).
The lines with ``$\delta=v$" for $v=0, 0.01, 0.05, 0.075, 0.1$ correspond
to the MU selector and those with ``$C-\delta=v$" to the Compensated
MU selector.
\\
\setcounter{figure}{0}
\renewcommand{\figurename}{Tab.}
\begin{center}
\begin{small}
\begin{tabular}{|c|c|c|c|c|c|}
\hline
&$\text{Err}_1$&$\text{Err}_2$&$\text{Nb}_1$&$\text{Nb}_2$&Exact\\\hline
$\delta=0$ &$\underset{(0.0114)}{0.0196}$&$\underset{(0.5865)}{1.334}$&$\underset{(10.91)}{70.13}$&$\underset{(0)}{1}$&$0$\\
C-$\delta=0$ &$\underset{(0.0145)}{0.0225}$&$\underset{(0.6993)}{1.495}$&$\underset{(8.343)}{80.09}$&$\underset{(0)}{1}$&$0$\\\hline
$\delta=0.01$ &$\underset{(0.0069)}{0.0131}$&$\underset{(0.3606)}{0.9318}$&$\underset{(9.507)}{45.45}$&$\underset{(0)}{1}$&$1$\\
C-$\delta=0.01$ &$\underset{(0.0062)}{0.0095}$&$\underset{(0.4625)}{0.8386}$&$\underset{(9.737)}{46.88}$&$\underset{(0)}{1}$&$0$\\\hline
$\delta=0.05$ &$\underset{(0.0038)}{0.0100}$&$\underset{(0.2121)}{0.8001}$&$\underset{(5.798)}{12.45}$&$\underset{(0)}{1}$&$3$\\
C-$\delta=0.05$&$\underset{(0.0027)}{0.0042}$&$\underset{(0.1844)}{0.3412}$&$\underset{(5.764)}{10.52}$&$\underset{(0)}{1}$&$6$\\\hline
$\delta=0.075$ &$\underset{(0.0030)}{0.0100}$&$\underset{(0.1869)}{0.8878}$&$\underset{(4.261)}{6.28}$&$\underset{(0)}{1}$&$14$\\
C-$\delta=0.075$&$\underset{(0.0020)}{0.0038}$&$\underset{(0.1348)}{0.3377}$&$\underset{(3.674)}{4.91}$&$\underset{(0)}{1}$&$21$\\\hline
$\delta=0.1$&$\underset{(0.0024)}{0.0110}$&$\underset{(0.1582)}{1.038}$&$\underset{(2.640)}{3.22}$&$\underset{(0)}{1}$&$36$\\
C-$\delta=0.1$&$\underset{(0.0015)}{0.0044}$&$\underset{(0.1040)}{0.4255}$&$\underset{(2.042)}{2.37}$&$\underset{(0)}{1}$&$54$\\\hline
\end{tabular}
\end{small}
\caption{Results for the model with missing data, $s=1$.}
\end{center}

\begin{center}
\begin{small}
\begin{tabular}{|c|c|c|c|c|c|}
\hline
&$\text{Err}_1$&$\text{Err}_2$&$\text{Nb}_1$&$\text{Nb}_2$&Exact\\\hline
$\delta=0$ &$\underset{(0.0170)}{0.0437}$&$\underset{( 1.060)}{2.756}$&$\underset{(5.149)}{80.04}$&$\underset{(0)}{2}$&$0$\\
C-$\delta=0$ &$\underset{(0.0275)}{0.0685}$&$\underset{(1.129)}{2.951}$&$\underset{(3.911)}{92.67}$&$\underset{(0)}{2}$&$0$\\\hline
$\delta=0.01$ &$\underset{(0.0107)}{0.0287}$&$\underset{(0.5423)}{1.838}$&$\underset{(6.717)}{49.29}$&$\underset{(0)}{2}$&$0$\\
C-$\delta=0.01$ &$\underset{(0.0098)}{0.0201}$&$\underset{(0.6827)}{1.561}$&$\underset{(6.775)}{48.18}$&$\underset{(0)}{2}$&$0$\\\hline
$\delta=0.05$ &$\underset{(0.0093)}{0.0264}$&$\underset{(0.4960)}{2.105}$&$\underset{(4.631)}{10.35}$&$\underset{(0)}{2}$&$1$\\
C-$\delta=0.05$&$\underset{(0.0066)}{0.0125}$&$\underset{(0.3849)}{0.9796}$&$\underset{(4.092)}{7.70}$&$\underset{(0)}{2}$&$8$\\\hline
$\delta=0.075$ &$\underset{(0.0090)}{0.0301}$&$\underset{(0.5022)}{2.694}$&$\underset{(2.587)}{4.77}$&$\underset{(0)}{2}$&$24$\\
C-$\delta=0.075$&$\underset{(0.0052)}{0.0148}$&$\underset{(0.3573)}{1.359}$&$\underset{(1.924)}{3.41}$&$\underset{(0)}{2}$&$47$\\\hline
$\delta=0.1$&$\underset{(0.0086)}{0.0371}$&$\underset{(0.4730)}{3.521}$&$\underset{(1.046)}{2.62}$&$\underset{(0)}{2}$&$65$\\
C-$\delta=0.1$&$\underset{(0.0059)}{0.0218}$&$\underset{(0.3853)}{2.088}$&$\underset{(0.617)}{2.28}$&$\underset{(0)}{2}$&$77$\\\hline
\end{tabular}
\end{small}
\caption{Results for the model with missing data, $s=2$.}
\end{center}

\begin{center}
\begin{small}
\begin{tabular}{|c|c|c|c|c|c|}
\hline
&$\text{Err}_1$&$\text{Err}_2$&$\text{Nb}_1$&$\text{Nb}_2$&Exact\\\hline
$\delta=0$ &$\underset{(0.0296)}{0.0772}$&$\underset{(1.268)}{4.361}$&$\underset{(4.177)}{83.95}$&$\underset{(0)}{3}$&$0$\\
C-$\delta=0$ &$\underset{(0.0436)}{0.1480}$&$\underset{(1.253)}{4.258}$&$\underset{(3.262)}{97.76}$&$\underset{(0)}{3}$&$0$\\\hline
$\delta=0.01$ &$\underset{(0.0176)}{0.0493}$&$\underset{(0.7907)}{2.929}$&$\underset{(6.515)}{49.78}$&$\underset{(0)}{3}$&$0$\\
C-$\delta=0.01$ &$\underset{(0.0153)}{0.0351}$&$\underset{(0.8442)}{2.328}$&$\underset{(6.302)}{48.23}$&$\underset{(0)}{3}$&$0$\\\hline
$\delta=0.05$ &$\underset{(0.0166)}{0.0528}$&$\underset{(0.7696)}{4.295}$&$\underset{(3.907)}{9.82}$&$\underset{(0)}{3}$&$1$\\
C-$\delta=0.05$&$\underset{(0.0109)}{0.0281}$&$\underset{(0.6360)}{2.343}$&$\underset{(3.608)}{7.02}$&$\underset{(0)}{3}$&$18$\\\hline
$\delta=0.075$ &$\underset{(0.0161)}{0.0643}$&$\underset{(0.7865)}{5.842}$&$\underset{(2.086)}{5.16}$&$\underset{(0)}{3}$&$29$\\
C-$\delta=0.075$&$\underset{(0.0106)}{0.0384}$&$\underset{(0.6556)}{3.606}$&$\underset{(1.177)}{3.82}$&$\underset{(0)}{3}$&$57$\\\hline
$\delta=0.1$&$\underset{(0.0164)}{0.0814}$&$\underset{(0.7434)}{7.792}$&$\underset{(0.9618)}{3.57}$&$\underset{(0)}{3}$&$64$\\
C-$\delta=0.1$&$\underset{(0.0121)}{0.0575}$&$\underset{(0.6554)}{5.538}$&$\underset{(0.3912)}{3.13}$&$\underset{(0)}{3}$&$89$\\\hline
\end{tabular}
\end{small}
\caption{Results for the model with missing data, $s=3$.}
\end{center}

\begin{center}
\begin{small}
\begin{tabular}{|c|c|c|c|c|c|}
\hline
&$\text{Err}_1$&$\text{Err}_2$&$\text{Nb}_1$&$\text{Nb}_2$&Exact\\\hline
$\delta=0$ &$\underset{(0.0536)}{0.1470}$&$\underset{(1.686)}{6.801}$&$\underset{(3.683)}{87.35}$&$\underset{(0)}{5}$&$0$\\
C-$\delta=0$ &$\underset{(0.0802)}{0.3631}$&$\underset{(1.490)}{6.114}$&$\underset{(4.039)}{104.23}$&$\underset{(0)}{5}$&$0$\\\hline
$\delta=0.01$ &$\underset{(0.0340)}{0.0961}$&$\underset{(1.180)}{4.928}$&$\underset{(5.527)}{49.64}$&$\underset{(0)}{5}$&$0$\\
C-$\delta=0.01$ &$\underset{(0.0281)}{0.0670}$&$\underset{(1.206)}{3.627}$&$\underset{(6.298)}{46.69}$&$\underset{(0)}{5}$&$0$\\\hline
$\delta=0.05$ &$\underset{(0.0391)}{0.1375}$&$\underset{(1.557)}{11.100}$&$\underset{(3.347)}{10.34}$&$\underset{(0)}{5}$&$6$\\
C-$\delta=0.05$&$\underset{(0.0307)}{0.0864}$&$\underset{(1.475)}{7.302}$&$\underset{(2.404)}{7.42}$&$\underset{(0)}{5}$&$27$\\\hline
$\delta=0.075$ &$\underset{(0.0427)}{0.1769}$&$\underset{(1.548)}{15.68}$&$\underset{(1.867)}{6.85}$&$\underset{(0)}{5}$&$31$\\
C-$\delta=0.075$&$\underset{(0.0427)}{0.1311}$&$\underset{(1.737)}{11.86}$&$\underset{(1.013)}{5.55}$&$\underset{(0)}{5}$&$68$\\\hline
$\delta=0.1$&$\underset{(0.0455)}{0.2286}$&$\underset{(1.385)}{21.19}$&$\underset{(1.049)}{5.67}$&$\underset{(0)}{5}$&$58$\\
C-$\delta=0.1$&$\underset{(0.0595)}{0.1933}$&$\underset{(2.056)}{17.71}$&$\underset{(0.6114)}{5.19}$&$\underset{(0)}{5}$&$88$\\\hline
\end{tabular}
\end{small}
\caption{Results for the model with missing data, $s=5$.}
\end{center}

\begin{center}
\begin{small}
\begin{tabular}{|c|c|c|c|c|c|}
\hline
&$\text{Err}_1$&$\text{Err}_2$&$\text{Nb}_1$&$\text{Nb}_2$&Exact\\\hline
$\delta=0$ &$\underset{(0.1407)}{0.4479}$&$\underset{(3.060)}{14.56}$&$\underset{(2.881)}{92.21}$&$\underset{(0)}{10}$&$0$\\
C-$\delta=0$ &$\underset{(0.1705)}{1.208}$&$\underset{(2.197)}{11.90}$&$\underset{(6.532)}{117.23}$&$\underset{(0)}{10}$&$0$\\\hline
$\delta=0.01$ &$\underset{(0.1263)}{0.3512}$&$\underset{(1.997)}{13.59}$&$\underset{(5.340)}{52.76}$&$\underset{(0)}{10}$&$0$\\
C-$\delta=0.01$ &$\underset{(0.1317)}{0.2921}$&$\underset{(2.049)}{10.70}$&$\underset{(6.067)}{48.74}$&$\underset{(0)}{10}$&$0$\\\hline
$\delta=0.05$ &$\underset{(0.2395)}{0.7660}$&$\underset{(4.389)}{47.13}$&$\underset{(4.152)}{20.29}$&$\underset{(0.1959)}{9.96}$&$0$\\
C-$\delta=0.05$&$\underset{(0.2696)}{0.6919}$&$\underset{(5.709)}{41.55}$&$\underset{(4.241)}{16.99}$&$\underset{(0.2374)}{9.94}$&$1$\\\hline
$\delta=0.075$ &$\underset{(0.2721)}{0.9683}$&$\underset{(5.496)}{65.24}$&$\underset{(3.545)}{16.78}$&$\underset{(0.4092)}{9.85}$&$0$\\
C-$\delta=0.075$&$\underset{(0.3067)}{0.9443}$&$\underset{(7.066)}{61.23}$&$\underset{(3.452)}{15.00}$&$\underset{(0.5499)}{9.76}$&$5$\\\hline
$\delta=0.1$&$\underset{(0.2807)}{1.150}$&$\underset{(6.745)}{82.86}$&$\underset{(2.948)}{14.84}$&$\underset{(0.6508)}{9.58}$&$1$\\
C-$\delta=0.1$&$\underset{(0.3049)}{1.157}$&$\underset{(8.359)}{80.43}$&$\underset{(2.804)}{13.57}$&$\underset{(0.7601)}{9.39}$&$11$\\\hline
\end{tabular}
\end{small}
\caption{Results for the model with missing data, $s=10$.}
\end{center}

The results of the simulations are quite convincing. Indeed, the
Compensated MU selector improves upon the MU selector with respect
to all the considered criteria, in particular when $\theta^*$ is
very sparse ($s=1,2,3$). The order of magnitude of the improvement
is such that, for the best $\delta$, the errors $\text{Err}_1$ and
$\text{Err}_2$ are divided by $2$. The improvement is not so
significant for larger $s$, especially for $s=10$ when the model
starts to be not very sparse.  For all the values of $s$, the
non-zero coefficients of $\theta^*$ are systematically in the
sparsity pattern both of the MU selector and of the Compensated MU
selector. The total number of non-zero coefficients is always
smaller (i.e., closer to the correct one) for the Compensated MU
selector. Finally, note that the best results for the error measures
$\text{Err}_1$ and $\text{Err}_2$ are obtained with $\delta\le
0.075$, while the sparsity pattern is better retrieved for
$\delta=0.1$. This reflects a trade-off between estimation and
selection.

\section{Proofs}\label{sec:preuve}

{\bf Proof of Remark \ref{rem:1}.} It is enough to show that
$\mathcal{A}(\mu,\tau)=\mathcal{B}(\mu,\tau)$ where
$$
\mathcal{B}(\mu,\tau)=\{\theta\in\Theta: \ \exists \
u\in\mathbb{R}^p \ \text{such that} \ (\theta,u)\in W(\mu,\tau)\}.
$$
Let first $(\t,u)\in W(\mu,\tau)$. Using the triangle inequality, we
easily get that $\t\in\mathcal{A}(\mu,\tau)$. Now take
$\t\in\mathcal{A}(\mu,\tau)$. We set
$$N=\frac{1}{n}Z^T(y-Z\theta)+\widehat D\theta$$
 and consider $u\in\mathbb{R}^p$ defined by
$$u_i=-N_i\mathrm{1}_{\{|N_i|\leq \mu|\theta|_1\}}
-\text{sign}(N_i)\mu|\theta|_1\mathrm{1}_{\{|N_i|>\mu|\theta|_1\}},$$
for $i=1,\ldots,p$, where $u_i$ and $N_i$ are the $i$th components
of $u$ and $N$ respectively. It is easy to check that $(\theta,u)\in
W\big(\mu,\tau\big)$, which concludes the proof.

\smallskip

\noindent {\bf Proof of Theorem \ref{t1}.} The proof is based on two
lemmas. For brevity, we will skip the dependence of $b(\e),
\delta_i(\e)$ and $\nu(\e)$ on $\varepsilon$.
\begin{lemma}\label{lem3}
With probability at least $1-6\varepsilon$, we have
$\theta^*\in\mathcal{A}(\varepsilon)$.
\end{lemma}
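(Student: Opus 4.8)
The plan is to verify directly that $\t^*$ satisfies the constraint defining $\mathcal{A}(\e)$, namely that $\big|\tfrac1n Z^T(y-Z\t^*)+\widehat D\t^*\big|_\infty \le \mu(\e)|\t^*|_1+\tau(\e)$, on an event of probability at least $1-6\e$. Since $\t^*\in\Th$ by assumption, this is exactly the statement $\t^*\in\mathcal{A}(\e)$. First I would plug the model equations $y=X\t^*+\xi$ and $Z=X+\Xi$ into the residual, obtaining $y-Z\t^*=\xi-\Xi\t^*$, and expand $\tfrac1n Z^T(y-Z\t^*)=\tfrac1n X^T\xi-\tfrac1n X^T\Xi\,\t^*+\tfrac1n\Xi^T\xi-\tfrac1n\Xi^T\Xi\,\t^*$.

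The main (purely algebraic) step is to rewrite this in terms of the matrices $M^{(k)}$ of Section~\ref{sec:def}. Using $\tfrac1n\Xi^T\Xi=M^{(4)}+M^{(5)}+D$ together with the definitions of $M^{(1)},M^{(2)},M^{(3)}$, and then adding the compensation term $\widehat D\t^*$, I would arrive at
$$\tfrac1n Z^T(y-Z\t^*)+\widehat D\t^*=M^{(2)}+M^{(3)}-M^{(1)}\t^*-M^{(4)}\t^*-M^{(5)}\t^*+(\widehat D-D)\t^*.$$
The crucial point is that the deterministic bias $D\t^*$ produced by the diagonal of $\tfrac1n\Xi^T\Xi$ is cancelled by $\widehat D\t^*$, leaving only the small estimation error $(\widehat D-D)\t^*$; this cancellation is the reason the compensation works, and getting the bookkeeping in this decomposition right is the only real obstacle.

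It then remains to bound the $\ell_\infty$-norm. I would apply the triangle inequality together with the elementary bound $|Av|_\infty\le|A|_\infty|v|_1$ for the three matrix-vector terms, and, for the diagonal term, $|(\widehat D-D)\t^*|_\infty\le\big(\max_j|\hat\sigma_j^2-\sigma_j^2|\big)|\t^*|_\infty\le\big(\max_j|\hat\sigma_j^2-\sigma_j^2|\big)|\t^*|_1$. This gives an upper bound of $|M^{(2)}|_\infty+|M^{(3)}|_\infty+\big(|M^{(1)}|_\infty+|M^{(4)}|_\infty+|M^{(5)}|_\infty+\max_j|\hat\sigma_j^2-\sigma_j^2|\big)|\t^*|_1$. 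Finally I would intersect the six events $\{|M^{(i)}|_\infty<\delta_i(\e)\}$ for $i=1,\dots,5$ from \eqref{pr2} and $\{\max_j|\hat\sigma_j^2-\sigma_j^2|<b(\e)\}$ from \eqref{pr1}; on this intersection the bound becomes $(\delta_1+\delta_4+\delta_5+b)|\t^*|_1+(\delta_2+\delta_3)=\mu(\e)|\t^*|_1+\tau(\e)$, which is precisely the constraint. Since each of the six complementary events has probability at most $\e$, the union bound yields probability at least $1-6\e$ for the intersection, which completes the plan.
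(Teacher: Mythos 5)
Your proposal is correct and coincides with the paper's own argument: the same algebraic decomposition of $\frac1n Z^T(y-Z\theta^*)+\widehat D\theta^*$ into the terms $M^{(1)}\theta^*$, $M^{(2)}$, $M^{(3)}$, $M^{(4)}\theta^*$, $M^{(5)}\theta^*$ and $(\widehat D-D)\theta^*$, followed by the triangle inequality, the bound $|Av|_\infty\le|A|_\infty|v|_1$, and a union bound over the six events. No gaps.
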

\begin{proof}
We first write that
$Z^T(y-Z\theta^*)+n{\widehat D}\theta^*$ is equal to
\begin{multline*}
-X^T\Xi\theta^*+X^T\xi+\Xi^T\xi-(\Xi^T\Xi-\text{Diag}\{\Xi^T\Xi\})\theta^*\\
-(\text{Diag}\{\Xi^T\Xi\}-nD)\theta^*+n(\widehat D-D)\theta^*.
\end{multline*}
By definition of the $\delta_i(\varepsilon)$ and $b(\varepsilon)$, with probability at least
$1-6\varepsilon$ we have
\begin{align}
&|\frac{1}{n}X^T\Xi\theta^*|_{\infty}\leq|\frac{1}{n}X^T\Xi|_{\infty}|\theta^*|_1\leq\delta_1|\theta^*|_1\label{ineg}\\
&|\frac{1}{n}X^T\xi|_{\infty}+|\frac{1}{n}\Xi^T\xi|_{\infty}\leq \delta_2+\delta_3\\
&|\frac{1}{n}(\Xi^T\Xi-\text{Diag}\{\Xi^T\Xi\})\theta^*|_{\infty}\leq|\frac{1}{n}(\Xi^T\Xi-\text{Diag}\{\Xi^T\Xi\})|_{\infty}|\theta^*|_1\leq\delta_4|\theta^*|_1\\
&|(\frac{1}{n}\text{Diag}\{\Xi^T\Xi\}-D)\theta^*|_{\infty}\leq|\frac{1}{n}\text{Diag}\{\Xi^T\Xi\}-D|_{\infty}|\theta^*|_1\leq \delta_5|\theta^*|_1\\
&|(\widehat D-D)\theta^*|_{\infty}\leq b|\theta^*|_1\label{ineg2}.
\end{align}
Therefore  $\theta^*\in\mathcal{A}(\varepsilon)$ with probability at
least $1-6\varepsilon$.
\end{proof}

\begin{lemma}\label{lem4}
With probability at least $1-6\varepsilon$, for
$\Delta=\hat{\theta}-\theta^*$ we have
$$|\frac{1}{n}X^TX\Delta|_{\infty}\leq \nu.$$
\end{lemma}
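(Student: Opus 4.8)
The plan is to bound $|\frac{1}{n}X^TX\Delta|_\infty$ by comparing the quantity $S(\theta)=\frac1{n}Z^T(y-Z\theta)+\widehat D\theta$ evaluated at the two points $\theta^*$ and $\hat\theta$, and then relating $S(\theta^*)-S(\hat\theta)$ to $\frac{1}{n}X^TX\Delta$ via the decomposition already used in Lemma~\ref{lem3}. By Lemma~\ref{lem3}, on an event of probability at least $1-6\varepsilon$ we have $\theta^*\in\mathcal{A}(\varepsilon)$, so the constraint set is nonempty and $\hat\theta$ is well defined; moreover, since $\hat\theta$ minimizes $|\cdot|_1$ over $\mathcal{A}(\varepsilon)$ and $\theta^*$ is feasible, we get $|\hat\theta|_1\le|\theta^*|_1$, which is the source of the factor multiplying $\mu(\varepsilon)$. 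Throughout I work on the same event of probability $1-6\varepsilon$ where the bounds \eqref{ineg}--\eqref{ineg2} on the matrices $M^{(k)}$ and on $\widehat D - D$ all hold simultaneously.

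First I would write $\frac1{n}X^TX\Delta = \frac1{n}X^TX(\hat\theta-\theta^*)$ and insert $\frac{1}{n}Z^TZ$ and $\widehat D$ to produce telescoping terms. Concretely, using $Z=X+\Xi$ and $y=X\theta^*+\xi$, a direct expansion shows
\begin{equation*}
\frac1{n}X^TX\Delta = \big(S(\theta^*)-S(\hat\theta)\big) - R,
\end{equation*}
where $R$ collects exactly the noise matrices $M^{(1)},\dots,M^{(5)}$ and the estimation error $\widehat D - D$ contracted against $\Delta$ (this is the same algebra as in the display in Lemma~\ref{lem3}, now applied to the difference rather than to $\theta^*$ alone). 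I would then bound $|R|_\infty$ by $\big(\delta_1+\delta_4+\delta_5+b\big)|\Delta|_1=\mu(\varepsilon)|\Delta|_1$ together with the $\tau(\e)=\delta_2+\delta_3$ contribution from the terms not contracted against $\theta^*$, being careful that some of the noise terms act on $\Delta$ while the $M^{(2)},M^{(3)}$ terms are constant in $\theta$ and cancel or contribute only $\tau(\e)$.

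Next, since both $\theta^*$ and $\hat\theta$ lie in $\mathcal{A}(\varepsilon)$, the constraint in \eqref{premu_sel2} gives $|S(\theta^*)|_\infty\le\mu(\varepsilon)|\theta^*|_1+\tau(\e)$ and $|S(\hat\theta)|_\infty\le\mu(\varepsilon)|\hat\theta|_1+\tau(\e)$. By the triangle inequality and $|\hat\theta|_1\le|\theta^*|_1$, the term $S(\theta^*)-S(\hat\theta)$ is bounded in $\ell_\infty$ by $2\mu(\varepsilon)|\theta^*|_1+2\tau(\e)$. Collecting the two contributions and using $|\Delta|_1\le|\hat\theta|_1+|\theta^*|_1\le 2|\theta^*|_1$ to control $\mu(\varepsilon)|\Delta|_1$ yields $|\frac1{n}X^TX\Delta|_\infty\le 2\mu(\varepsilon)|\theta^*|_1+2\tau(\e)+2\mu(\varepsilon)|\theta^*|_1$, which after matching terms against the extra $\delta_1|\theta^*|_1$ piece reproduces $\nu(\varepsilon)=2\big(\mu(\varepsilon)+\delta_1(\varepsilon)\big)|\theta^*|_1+2\tau(\e)$.

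The step I expect to be the main obstacle is the bookkeeping in the decomposition of $R$: one must verify that each noise term contracted against $\Delta$ (rather than $\theta^*$) still produces the coefficient $\mu(\varepsilon)$ and not something larger, and in particular that the bound $|\Delta|_1\le 2|\theta^*|_1$ is what converts the $\mu(\varepsilon)|\Delta|_1$ term into the clean expression in $\nu(\varepsilon)$. The appearance of the extra $\delta_1(\varepsilon)$ (beyond the $\delta_1$ already inside $\mu(\varepsilon)$) suggests that the $M^{(1)}=\frac1{n}X^T\Xi$ term is accounted for twice — once acting on $\theta^*$ inside $S$ and once inside $R$ acting on $\Delta$ — so I would take care to track that term separately rather than folding it prematurely into $\mu(\varepsilon)|\Delta|_1$.
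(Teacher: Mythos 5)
Your plan --- bound $S(\theta^*)$ and $S(\hat\theta)$ separately via the constraint in \eqref{premu_sel2} and collect the remainder into a noise term $R$ contracted against $\Delta$ --- is a genuinely different route from the paper's, but as written it does not deliver the constant $\nu(\e)$, and the final arithmetic step is where it breaks. Carrying out your decomposition exactly gives
$$
\Psi\Delta=S(\theta^*)-S(\hat\theta)-R,\qquad R=\Big(M^{(1)}+(M^{(1)})^T+M^{(4)}+M^{(5)}\Big)\Delta-(\widehat D-D)\Delta,
$$
because $\frac1{n}Z^TZ=\Psi+M^{(1)}+(M^{(1)})^T+M^{(4)}+M^{(5)}+D$: the terms $M^{(2)},M^{(3)}$ cancel entirely (they do not depend on $\theta$), and $M^{(1)}$ enters \emph{twice}, once transposed. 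Hence $|R|_\infty\le(\mu(\e)+\delta_1(\e))|\Delta|_1\le 2(\mu(\e)+\delta_1(\e))|\theta^*|_1$, which by itself already equals $\nu(\e)-2\tau(\e)$. Adding $|S(\theta^*)|_\infty+|S(\hat\theta)|_\infty\le 2\mu(\e)|\theta^*|_1+2\tau(\e)$ gives $(4\mu(\e)+2\delta_1(\e))|\theta^*|_1+2\tau(\e)$, which exceeds $\nu(\e)$ by $2\mu(\e)|\theta^*|_1$. Your closing claim that $2\mu|\theta^*|_1+2\tau+2\mu|\theta^*|_1$ ``reproduces'' $\nu=2(\mu+\delta_1)|\theta^*|_1+2\tau$ is an arithmetic error: $4\mu\ne 2\mu+2\delta_1$ unless $\delta_4=\delta_5=b=0$.

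The missing idea is a cancellation that your two separate triangle inequalities destroy. The quantities $M^{(1)}\theta^*$, $M^{(4)}\theta^*$, $M^{(5)}\theta^*$ and $(\widehat D-D)\theta^*$ occur once inside $S(\theta^*)$ (this is exactly the display in Lemma~\ref{lem3}) and once inside $R$ through $M^{(k)}\Delta=M^{(k)}\hat\theta-M^{(k)}\theta^*$, and these occurrences cancel exactly, leaving
$$
\Psi\Delta=-S(\hat\theta)-\big(M^{(1)}+M^{(4)}+M^{(5)}\big)\hat\theta+(\widehat D-D)\hat\theta+M^{(2)}+M^{(3)}-(M^{(1)})^T\Delta,
$$
whence $|\Psi\Delta|_\infty\le(\mu|\hat\theta|_1+\tau)+(\delta_1+\delta_4+\delta_5+b)|\hat\theta|_1+(\delta_2+\delta_3)+\delta_1|\Delta|_1\le(2\mu+\delta_1)|\hat\theta|_1+\delta_1|\theta^*|_1+2\tau\le\nu$ --- which is precisely the paper's (asymmetric) argument: the constraint is invoked only at $\hat\theta$, while $\theta^*$ enters only through the substitution $y=X\theta^*+\xi$ and through feasibility, which yields $|\hat\theta|_1\le|\theta^*|_1$. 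So either keep your identity but refrain from bounding $S(\theta^*)$ and $R$ separately, or abandon the symmetric treatment; as stated, your route proves only the weaker inequality with $4\mu+2\delta_1$ in place of $2\mu+2\delta_1$, not the lemma.
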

\begin{proof}
Throughout the proof, we assume that we are on event of probability
at least $1-6\varepsilon$ where inequalities \eqref{ineg} --
\eqref{ineg2} hold and $\theta^*\in\mathcal{A}(\varepsilon)$. We
have
$$|\frac{1}{n}X^TX\Delta|_{\infty}\leq |\frac{1}{n}Z^T(Z\hat{\theta}-\Xi\hat{\theta}-y+\xi)|_{\infty}+|\frac{1}{n}\Xi^TX\Delta|_{\infty}.$$
Consequently,
\begin{multline*}
|\frac{1}{n}X^TX\Delta|_{\infty}\leq |\frac{1}{n}Z^T(Z\hat{\theta}-y)-{\widehat D}\hat{\theta}|_{\infty}\\
+|(\frac{1}{n}Z^T\Xi-D)\hat{\theta}|_{\infty}+|(\widehat
D-D)\hat{\theta}|_{\infty}+|\frac{1}{n}Z^T\xi|_{\infty}+|\frac{1}{n}\Xi^TX\Delta|_{\infty}.
\end{multline*}
Using that $\hat{\theta}\in\mathcal{A}(\varepsilon)$, we easily get
that $|\frac{1}{n}X^TX\Delta|_{\infty}$ is not greater than
$$\mu|\hat\theta|_1+2\delta_2+2\delta_3+b|\hat\theta|_1+|(\frac{1}{n}Z^T\Xi-D)\hat{\theta}|_{\infty}+|\frac{1}{n}\Xi^TX\Delta|_{\infty}.
$$
Now remark that
\begin{eqnarray*}
&&|(\frac{1}{n}Z^T\Xi-D)\hat{\theta}|_{\infty}\leq
|\frac{1}{n}Z^T\Xi-D|_{\infty}|\hat{\theta}|_1 \\&& \le
\big(|\frac{1}{n}(\Xi^T\Xi-\text{Diag}\{\Xi^T\Xi\})|_{\infty}
+|\frac{1}{n}\text{Diag}\{\Xi^T\Xi\}-D|_{\infty}
+|\frac{1}{n}X^T\Xi|_{\infty}\big)|\hat{\theta}|_1\\
&& \le (\delta_1+\delta_4+\delta_5)|\hat{\theta}|_1.
\end{eqnarray*}
Finally, using that
$$
 |\frac{1}{n}\Xi^TX\Delta|_{\infty}\leq |\hat{\theta}-\theta^*|_1|\frac{1}{n}X^T\Xi|_{\infty}\leq \delta_1(|\hat{\theta}|_1+|\theta^*|_1)\\
$$
together with the fact that $|\hat{\theta}|_1\leq |\theta^*|_1$, we
obtain the result.
\end{proof}

\smallskip

We now proceed to the proof of Theorem \ref{t1}. The bounds
(\ref{t1:1}) and (\ref{t1:2}) follow from Lemma \ref{lem4}, the fact
that $|\Delta_{J^c}|_1 \le |\Delta_{J}|_1$ where $J$ is the set of
non-zero components of $\t^*$ (cf. Lemma~1 in Rosenbaum and
Tsybakov~(2010)) and the definition of the sensitivities
$\kappa_q(s)$, $\kappa_k^*(s)$. To prove (\ref{t1:3}), first note
that
\begin{equation}\label{p1}
\frac{1}{n}|X\Delta|_2^2\le
\frac{1}{n}|X^TX\Delta|_{\infty}|\Delta|_1,
\end{equation}
and use (\ref{t1:1}) with $q=1$ and Lemma \ref{lem4}. This yields
the first term under the minimum on the right hand side of
(\ref{t1:3}). The second term is obtained again from (\ref{p1}),
Lemma~\ref{lem4} and the inequality $|\Delta|_1\le |\hat\t|_1 +
|\t^*|_1\le 2|\t^*|_1$.

\smallskip

\noindent {\bf Proof of Theorem \ref{t2}.} The bounds (\ref{t2:1})
and (\ref{t2:4}) follow by combining (\ref{t1:1}) with (\ref{k3})
and with (\ref{k1}) -- (\ref{k2}) respectively. Next, (\ref{t2:2})
follows from (\ref{t1:3}) and (\ref{k3}). Also, as an easy
consequence of (\ref{t1:1}) and (\ref{k4}) with $q=2$ we get
$$
|\hat\t-\t^*|_2 \le \frac{4\nu(\varepsilon)s^{1/2}}{\kappa_{\rm
RE}(2s)}\,.
$$
Finally, (\ref{t2:3}) follows from this inequality and (\ref{t2:1})
using the interpolation formula $|\Delta|_q^q\le
|\Delta|_1^{2-q}|\Delta|_2^{2(q-1)}$ for $\Delta = \hat\t-\t^*$, and
the fact that $\kappa_{\rm RE}(s)\ge \kappa_{\rm RE}(2s)$.


\begin{thebibliography}{99}


\bibitem{BC2} Belloni, A., and Chernozhukov, V. (2011).
High dimensional sparse econometric models: an introduction. In:
{\em Inverse Problems and High Dimensional Estimation, Stats in the
Ch\^ateau 2009}, Alquier, P., E. Gautier, and G. Stoltz, Eds., {\em
Lecture Notes in Statistics}, {\bf 203} 127--162, Springer, Berlin.

\bibitem{brt} Bickel, P.J., Ritov, Y. and Tsybakov, A.B. (2009).
Simultaneous analysis of Lasso and Dantzig selector. The Annals of
Statistics {\bf 37} 1705-1732.

\bibitem{btw07a}
Bunea, F., Tsybakov, A.B. and Wegkamp, M.H. (2007a). Aggregation for
Gaussian regression. The Annals of Statistics {\bf 35} 1674-1697.


\bibitem{btw07b}
Bunea, F., Tsybakov, A.B. and Wegkamp, M.H. (2007b). Sparsity oracle
inequalities for the Lasso, {Electronic Journal of Statistics} {\bf
1}  169-194.

\bibitem{BvdG}
{B\"uhlmann, P., and S. A. van de Geer} (2011). {\em Statistics for
High-Dimensional Data}. Springer, New-York.

%
%
%

\bibitem {ct} Cand\`es, E.J. and Tao, T. (2007). The Dantzig selector: statistical
estimation when $p$ is much larger than $n$ (with discussion). The
Annals of Statistics {\bf 35}  2313-2404.


%
%
%


\bibitem{gt} Gautier, E. and Tsybakov, A.B~(2011) High-dimensional instrumental variables regression and
confidence sets. \texttt{arXiv:1105.2454}

\bibitem{l} Lounici, K. (2008). Sup-norm
convergence rate and sign concentration property of Lasso and
Dantzig estimators. Electronic Journal of Statistics {\bf 2}
90-102.


\bibitem{kol} Koltchinskii, V. (2009). Dantzig selector and sparsity oracle
inequalities. Bernoulli {\bf 15} 799-828.

\bibitem{kolsf} Koltchinskii, V. (2011). Oracle inequalities in empirical risk minimization
and sparse recovery problems. \'Ecole d'\'Et\'e de Probabilit\'es de
Saint-Flour 2008. Lecture Notes in Mathematics, Vol. 2033.



%


\bibitem{p} Petrov,V.V. (1995). {\em Sums of Independent Random
Variables.} Oxford University Press.

\bibitem{rt} Rosenbaum, M. and Tsybakov A.B. (2010). Sparse
recovery under matrix uncertainty. The Annals of Statistics {\bf 38}
2620--2651.



%
%
%
%

\end{thebibliography}
\end{document}